
\documentclass[final]{siamltex}

\usepackage[parfill]{parskip}    
\usepackage{amsmath,amssymb,latexsym,enumerate, mathrsfs}
\usepackage{hyperref}
\usepackage{array}
\usepackage{times}

\usepackage{epstopdf}
\usepackage{subfig}
\usepackage{graphicx}

\usepackage{moreverb}
\usepackage{marvosym}
\usepackage{color,soul}
\definecolor{lightblue}{rgb}{.90,.95,1}
\sethlcolor{yellow}

\DeclareGraphicsRule{.tif}{png}{.png}{`convert #1 `dirname #1`/`basename #1 .tif`.png}

\DeclareMathOperator*{\Var}{Var}
\DeclareMathOperator*{\Rerr}{R_{err}}
\newtheorem{remark}[theorem]{Remark}

\newtheorem{assumption}[theorem]{\bf Assumption}

\hyphenation{op-tical net-works semi-conduc-tor}


\title{On the rare-event simulations of diffusion processes pertaining to a chain of distributed systems with small random perturbations\thanks{Version 1.0 -- August, 2020.}}


\author{Getachew K. Befekadu\footnotemark[2]}

\begin{document}
\maketitle

\renewcommand{\thefootnote}{\fnsymbol{footnote}}

\footnotetext[2]{Department of Electrical \& Computer Engineering, Morgan State University, 1700 E. Cold Spring Lane, Schaefer Engr. Bldg. 331, Baltimore, MD 21251, USA ({\tt getachew.befekadu@morgan.edu}).}

\renewcommand{\thefootnote}{\arabic{footnote}}

\begin{abstract}
In this paper, we consider an importance sampling problem for a certain rare-event simulations involving the behavior of a diffusion process pertaining to a chain of distributed systems with random perturbations. We also assume that the distributed system formed by $n$-subsystems -- in which a small random perturbation enters in the first subsystem and then subsequently transmitted to the other subsystems -- satisfies an appropriate H\"{o}rmander condition. Here we provide an efficient importance sampling estimator, with an exponential variance decay rate, for the asymptotics of the probabilities of the rare events involving such a diffusion process that also ensures a minimum relative estimation error in the small noise limit. The framework for such an analysis basically relies on the connection between the probability theory of large deviations and the values functions for a family of stochastic control problems associated with the underlying distributed system, where such a connection provides a computational paradigm -- based on an exponentially-tilted biasing distribution -- for constructing efficient importance sampling estimators for the rare-event simulation. Moreover, as a by-product, the framework also allows us to derive a family of Hamilton-Jacobi-Bellman for which we also provide a solvability condition for the corresponding optimal control problem.
\end{abstract}

\begin{keywords} 
Diffusion processes, distributed systems, exit probability, HJB equations, importance sampling, large deviations, rare-event simulation, stochastic control problem
\end{keywords}

\begin{AMS}
37H10, 35F21, 49K15, 49L20, 60F10, 65C05
\end{AMS}

\pagestyle{myheadings}
\thispagestyle{plain}
\markboth{G. K. BEFEKADU}{On the rare-event simulations of diffusion processes }

\section{Introduction}	 \label{S1}
In this paper, we consider an importance sampling problem for a certain rare-event simulations involving the behavior of a diffusion process pertaining to a chain of distributed systems with random perturbations. We also assume that the distributed system formed by $n$-subsystems -- in which a small random perturbation enters only in the first subsystem and then subsequently transmitted to the other subsystems -- satisfies an appropriate H\"{o}rmander condition and, as a result of this, the corresponding diffusion process is assumed to have a transition probability density with strong Feller property (e.g., see \cite{ReyL00}, \cite{Soi00}, \cite{BefA15} and \cite{EckPR99} for related discussions from the physical point of view). Here we provide an efficient importance sampling estimator, with an exponential variance decay rate, for the asymptotics of the probabilities of the rare-event simulations involving the behavior of such a diffusion process which also ensures a minimum relative estimation error in the small noise limit case. The framework for such an analysis basically relies on the connection between the probability theory of large deviations (i.e., the asymptotics for exit probabilities from a given bounded open domain) and that of the values functions for a family of stochastic control problems associated with the underlying distributed system, where such a connection also provides a desirable computational paradigm -- based on an exponentially-tilted biasing distribution -- for constructing an efficient importance sampling estimator for rare-event simulations in the context of such a diffusion process. Moreover, as a by-product, the framework also allows us to derive a family of Hamilton-Jacobi-Bellman for which we also provide a solvability condition for the corresponding optimal control problem.

Here, it is worth mentioning that a number of interesting studies based on various approximations techniques from the theory of large deviations have provided a framework for constructing efficient importance sampling estimators for rare-event simulation problems involving the behavior of diffusion processes (e.g., \cite{BudD19}, \cite{DupE97}, \cite{DupW04}, \cite{RenV04} and \cite{VanW12} for additional discussions). The main approach followed in these studies is to construct exponentially-tilted biasing distributions, which was originally introduced for proving Cram\'{e}r's theorem and its extension, and later on it was found to be an efficient importance sampling distribution for certain problems with various approximations involving rare-events (e.g., see \cite{Sie76}, \cite{AsmG10}, \cite{Gar77} or \cite{DupE97} for detailed discussions). Note that the rationale behind our framework follows in some sense the settings of these papers. However, to our knowledge, the problem of rare-event simulations involving the behavior of a diffusion process pertaining to a chain of distributed systems has not been addressed in the context of stochastic control arguments along with asymptotic analysis in the small noise limit; and it is important because it provides a framework for constructing an efficient importance sampling estimator for rare-event simulations in the context of such a diffusion process.

This paper is organized as follows. In Section~\ref{S2}, we discuss our problem formulation, where we start by considering a diffusion process pertaining to a distributed systems with small random perturbations and we formalize our problem formulation. In this section, we also provide an asymptotic estimate on the exit probability using the Ventcel-Freidlin theorem \cite{VenFre70} (see also \cite[Chapter~4]{FreWe84}) and the stochastic control arguments from Fleming \cite{Fle78} (see also \cite{FlemRi75}). In particular, as we discuss in detail in Subsection~\ref{S2(2)}, such an asymptotic estimate relies on the interpretation of the exit probability function as a value function for a family of stochastic control problems that can be associated with the underlying distributed control system with small random perturbations. In Section~\ref{S3}, we provide our main result for an efficient importance sampling estimator of the rare-events involving the behavior of such a diffusion process. Finally, Section~\ref{S4} provides some concluding remarks.

\section{Problem setup and preliminary results} \label{S2}
In this section, we consider the following distributed system, which is formed by a chain of $n$ subsystems, with small random perturbations, i.e.,
\begin{eqnarray}
\left.\begin{array}{l}
d x_t^{\epsilon,1} = f_1(t, x_t^{\epsilon,1}, x_t^{\epsilon,2}, \ldots, x_t^{\epsilon,n}) dt + \sqrt{\epsilon} \, \sigma(t, x_t^{\epsilon,1}, x_t^{\epsilon,2}, \ldots, x_t^{\epsilon,n})dW_t \\
d x_t^{\epsilon,2} = f_2(t, x_t^{\epsilon,1}, x_t^{\epsilon,2}, \ldots, x_t^{\epsilon,n}) dt  \\
d x_t^{\epsilon,3} = f_3(t, x_t^{\epsilon,2}, x_t^{\epsilon,3}, \ldots, x_t^{\epsilon,n}) dt  \\
 \quad\quad\quad~~ \vdots  \\
d x_t^{\epsilon,n} = f_n(t, x_t^{\epsilon,n-1}, x_t^{\epsilon,n}) dt, ~\,\text{with} ~\, (x_s^{\epsilon,1}, x_s^{\epsilon,2}, \ldots, x_s^{\epsilon,n}), ~\, 0 \le s \le t \le T,
\end{array}\right. \label{Eq1} 
\end{eqnarray}
where
\begin{itemize}
\item[-] $x_{\cdot}^{\epsilon,i}$ is an $\mathbb{R}^{d}$-valued diffusion process that corresponds to the $i$th-subsystem,
\item[-] the functions $f_i$ are uniformly Lipschitz, with bounded first derivatives, 
\item[-] $\sigma \colon [0, \infty) \times \mathbb{R}^{nd} \rightarrow \mathbb{R}^{d \times d}$ is Lipschitz with the least eigenvalue of $\sigma\,\sigma^T$ uniformly bounded away from zero, i.e., 
\begin{align*}
 \sigma(t, x)\,\sigma^T(t, x) \ge \lambda I_{d \times d} , \quad \forall x \in \mathbb{R}^{nd}, \quad \forall t \ge 0,
\end{align*}
for some $\lambda > 0$,
\item[-] $W_{\cdot}$ (with $W_0 = 0$) is an $d$-dimensional standard Wiener process,
\item[-] $\epsilon$ is a small positive number that represents the level of random perturbation in the distributed system,
\end{itemize}
for $i = 1, 2, \ldots, n$.

Note that, from the physical point of view (e.g., see \cite{ReyL00}, \cite{Soi00}, \cite{BefA15} or \cite{EckPR99}), the functions $f_i$, for $i \ge 3$, cannot be dependent on the position variables $x_t^{1}, \ldots, x_t^{i-1}$ and, as a result of this, the random perturbation enters only in the first subsystem and then it subsequently passes through the second subsystem, the third subsystem, and so on to the $n$th-subsystem. Hence, such a distributed system is described by an $\mathbb{R}^{nd}$-valued diffusion process, which is degenerate in the sense that the backward operator associated with it is a degenerate parabolic equation. 

In what follows, let us denote the solution of the stochastic differential equation (SDE) in \eqref{Eq1} by a bold face letter $\bigl(\mathbf{x}_t^{\epsilon}\bigr)_{t \ge 0} = \bigl(x_t^{\epsilon,1}, x_t^{\epsilon,2}, \ldots, x_t^{\epsilon,n}\bigr)_{t \ge 0}$ as an $\mathbb{R}^{nd}$-valued diffusion process and further rewrite the above SDE as follows
\begin{align}
d \mathbf{x}_t^{\epsilon} = \mathbf{f} (t, \mathbf{x}_t^{\epsilon}) + \sqrt{\epsilon}\,\mathbf{b}\,\sigma(t, \mathbf{x}_t^{\epsilon}) \, dW_t,
\end{align}
where $\mathbf{f} = (f_1, f_2, \ldots, f_n)$ is an $\mathbb{R}^{nd}$-valued function, $\mathbf{b}$ is an $nd \times d$ constant matrix that embeds $\mathbb{R}^{d}$ into $\mathbb{R}^{nd}$, i.e., $\mathbf{b} = (I_{d \times d}, 0, \ldots, 0)^T$ and, moreover, for $\mathbf{x} =(x_1,\, x_2, \, \ldots, \, x_n)$, we set $\mathbf{x}^{i,n} =(x_i,\, \ldots, \, x_n)$. Note that the corresponding backward operator for the diffusion process $\mathbf{x}_t^\epsilon$, when applied to a certain function $\upsilon^{\epsilon}(t, \mathbf{x})$, is given by
\begin{eqnarray}
 \partial_t \upsilon^{\epsilon} + \mathcal{L}^{\epsilon} \upsilon \triangleq \frac{\partial \upsilon^{\epsilon}}{\partial t} + \frac{\epsilon}{2} \operatorname{tr}\Bigl \{a\, \upsilon_{x^{1}x^{1}}^{\epsilon} \Bigr\} + \sum\nolimits_{i=1}^n \bigl\langle  f_{i},\, \upsilon_{x^{i}}^{\epsilon} \bigr\rangle, \label{Eq5}
\end{eqnarray}
where $a(t, \mathbf{x})=\sigma(t, \mathbf{x})\,\sigma^T(t, \mathbf{x})$.

Let $\Omega \in \mathbb{R}^{nd}$ be bounded open domains with smooth boundary (i.e., $\partial \Omega$ is a manifold of class $C^2$) and let $\Omega^{T}$ be an open set defined by
\begin{align*}
\Omega^{T} = (0, T) \times \Omega.
\end{align*}
Furthermore, let us denote by $C^{\infty}(\Omega^{T})$ the spaces of infinitely differentiable functions on $\Omega^{T}$ and by $C_0^{\infty}(\Omega^{T})$ the space of the functions $\phi \in C^{\infty}(\Omega^{T})$ with compact support in $\Omega^{T}$. A locally square integrable function $\upsilon^{\epsilon}(t, \mathbf{x})$ on $\Omega^{T}$ is said to be a distribution solution to the following equation
\begin{eqnarray}
 \partial_t \upsilon^{\epsilon} + \mathcal{L}^{\epsilon} \upsilon^{\epsilon} = 0,  \label{Eq6}
\end{eqnarray}
if, for any test function $\phi \in C_0^{\infty}(\Omega^{T})$, the following holds true
\begin{eqnarray}
 \int_{\Omega^{T}} \left(-\frac{\partial \phi} {\partial t} + {\mathcal{L}^{\epsilon}}^{\ast} \phi \right)\upsilon^{\epsilon} d \Omega^{T} = 0, \label{Eq7}
\end{eqnarray}
where $d \Omega^{T}$ denotes the Lebesgue measure on $\mathbb{R}^{nd + 1}$ and ${\mathcal{L}^{\epsilon}}^{\ast}$ is an adjoint operator corresponding to the infinitesimal generator $\mathcal{L}^{\epsilon}$ of the process $\mathbf{x}_t^\epsilon$, i.e.,
\begin{eqnarray}
 {\mathcal{L}^{\epsilon}}^{\ast} \phi = \frac{\epsilon}{2} \sum\nolimits_{j=1}^d \sum\nolimits_{m=1}^d \bigr(a_{j,m} \phi \bigr)_{x_j^{1} x_m^{1}} - \sum\nolimits_{i=1}^n \sum\nolimits_{m=1}^d \bigl(f_{i}\phi\bigr)_{x_m^{i}}.  \label{Eq8}
\end{eqnarray}

Moreover, we also assume that the following statements hold for the distributed system in \eqref{Eq1}.

\begin{assumption} \label{AS1} ~\\\vspace{-3mm}
\begin{enumerate} [(a)]
\item The function $\mathbf{f}$ is a bounded $C^{\infty}((0, \infty) \times \Omega)$-function, with bounded first derivatives. Moreover, $\sigma$ and $\sigma^{-1}$ are bounded $C^{\infty} \bigl((0, \infty) \times \mathbb{R}^{nd}\bigr)$-functions, with bounded first derivatives.
\item The backward operator in \eqref{Eq5} is hypoelliptic in $C^{\infty}((0, \infty) \times \Omega)$ (e.g., see \cite{Hor67} or \cite{Ell73}).
\item Let $n(\mathbf{x})$ be the outer normal vector to $\partial \Omega$ and, further, let $\Gamma^{+}$ and $\Gamma^{0}$ denote the sets of points $(t, \mathbf{x})$, with $\mathbf{x} \in \partial \Omega$, such that
\begin{align*}
\bigl\langle \mathbf{f}(t, \mathbf{x}),\, n(\mathbf{x}) \bigr\rangle
\end{align*}
is positive and zero, respectively.\footnote{Note that
\begin{align*}
&\mathbb{P}_{s, \mathbf{x}_s^{\epsilon}}^{\epsilon} \Bigl\{ \bigl( \tau^{\epsilon}, \mathbf{x}_{\tau^{\epsilon}}^{\epsilon}\bigr) \in \Gamma^{+} \bigcup \Gamma^{0},\,\, \tau^{\epsilon} < \infty \Bigr \} =1,  \quad \forall \bigl(s, \mathbf{x}_s^{\epsilon}\bigr) \in \Omega_{0}^{\infty}.
\end{align*}
where $\tau^{\epsilon} = \inf \bigl\{ t > s \, \bigl\vert \, \mathbf{x}_t^{\epsilon} \in \partial \Omega \bigr\}$. Moreover, if
\begin{align*}
&\mathbb{P}_{s, \mathbf{x}_s^{\epsilon}}^{\epsilon} \Bigl\{ \bigl(t, \mathbf{x}_t^{\epsilon}\Bigr) \in \Gamma^{0} ~ \text{for some} ~ t \in [s, T] \Bigr \} = 0, \quad \forall \bigl(s, \mathbf{x}_s^{\epsilon}\bigr) \in \Omega_{0}^{\infty},
\end{align*}
and $\tau^{\epsilon} \le T$, then we have $\bigl( \tau^{\epsilon},  \mathbf{x}_{\tau^{\epsilon}}^{\epsilon} \bigr) \in \Gamma^{+}$, almost surely (see \cite[Section~7]{StrVa72}).}
\end{enumerate}
\end{assumption}

\begin{remark} 
Here we remark that, in general, the hypoellipticity assumption (i.e., the backward operator in \eqref{Eq5}) is related to a strong accessibility property of controllable nonlinear systems that are driven by white noise (e.g., see \cite{SusJu72} concerning the controllability of nonlinear systems, which is closely related to \cite{StrVa72} and \cite{IchKu74}; see also \cite[Section~3]{Ell73}). That is, the hypoellipticity assumption implies that the diffusion process $\mathbf{x}_t^{\epsilon}$ has a transition probability density with a strong Feller property.
\end{remark}

\subsection{Importance sampling} \label{S2(1)}
In this paper, we are mainly concerned with estimating the following quantity
\begin{eqnarray}
 \mathbb{E}_{s, \mathbf{x}_s^{\epsilon}}^{\epsilon}\Bigl[ \exp\Bigl( -\frac{1} {\epsilon} \Phi^{\epsilon}(\mathbf{x}^{\epsilon}) \Bigr) \Bigr], \label{Eqx.1}
\end{eqnarray}
where $\Phi^{\epsilon}$ is an appropriate functional on $C\bigl([0, T]; \mathbb{R}^{nd}\bigr)$ and $\mathbf{x}^{\epsilon}$ is a solution of the SDE in \eqref{Eq1} and our analysis is in the situation where the level of the random perturbation is small, i.e., $\epsilon \ll 1$, and the functional $\mathbb{E}_{s, \mathbf{x}_s^{\epsilon}}^{\epsilon} \bigl[\exp\bigl( -\frac{1} {\epsilon} \Phi^{\epsilon}(\mathbf{x}^{\epsilon}) \bigr)\bigr]$ is rapidly varying in $\mathbf{x}^{\epsilon}$. Note that the challenge presented by such an analysis of rare-event probabilities is well documented (see \cite{Buc04}, \cite{AsmG10} and \cite{BudD19} for additional discussions). In the following (and also in Subsection~\ref{S2(1)} and Section~\ref{S3}), we specifically consider the case when the functional $\Phi^{\epsilon}$ is bounded and nonnegative Lipschitz, with $\Phi^{\epsilon} = 0$, if $\mathbf{x}_t^{\epsilon} \in \Omega^T \subset C([0, T]: \mathbb{R}^{nd})$ and $\Phi^{\epsilon} = \infty$ otherwise; and we further consider analysis on the asymptotic estimates for exit probabilities from a given bounded open domain in the small noise limit case. 

Consider the following simple estimator for the quantity of interest in \eqref{Eqx.1}
\begin{eqnarray}
 \rho(\epsilon) = \frac{1}{N} \sum\nolimits_{j=1}^N \exp\Bigl( -\frac{1} {\epsilon} \Phi^{\epsilon}( {\mathbf{x}^{\epsilon}}^{(j)}) \Bigr), \label{Eqx.2}
\end{eqnarray}
where $\Bigl\{ {\mathbf{x}^{\epsilon}}^{(j)} \Bigr\}_{j=1}^N$ are $N$-copies of independent samples of $\mathbf{x}^{\epsilon}$. Here we remark that such an estimator is unbiased in the sense that  
\begin{align}
\mathbb{E}_{s, \mathbf{x}_s^{\epsilon}}^{\epsilon}\bigl[\rho(\epsilon)\bigr] = \mathbb{E}_{s, \mathbf{x}_s^{\epsilon}}^{\epsilon} \Bigl[ \exp\Bigl( -\frac{1} {\epsilon} \Phi^{\epsilon}(\mathbf{x}^{\epsilon}) \Bigr) \Bigr], \label{Eqx.3}
\end{align}
Moreover, its variance is given by 
\begin{align} 
\Var\left(\rho(\epsilon)\right) = \frac{1}{N} \left(  \mathbb{E}_{s, \mathbf{x}_s^{\epsilon}}^{\epsilon} \Bigl[ \exp\Bigl( -\frac{2} {\epsilon} \Phi^{\epsilon}(\mathbf{x}^{\epsilon}) \Bigr) \Bigr] -  \mathbb{E}_{s, \mathbf{x}_s^{\epsilon}}^{\epsilon}\Bigl[ \exp\Bigl( -\frac{1} {\epsilon} \Phi^{\epsilon}(\mathbf{x}^{\epsilon}) \Bigr) \Bigr]^2 \right ). \label{Eqx.4}
\end{align}
Then, we have the following for the relative estimation error
\begin{align}
\Rerr\left(\rho(\epsilon)\right) = \frac{\sqrt {\Var\left(\rho(\epsilon)\right)}}{\mathbb{E}_{s, \mathbf{x}_s^{\epsilon}}^{\epsilon}\bigl[\rho(\epsilon)\bigr]} \label{Eqx.5}
\end{align}
which can be further rewritten as follows 
\begin{align}
\Rerr\left(\rho(\epsilon)\right) = \bigl(1/\sqrt{N}\bigr) \sqrt{\Delta\left(\rho(\epsilon)\right) - 1}, \label{Eqx.6}
\end{align}
where 
\begin{align}
\Delta\left(\rho(\epsilon)\right) = \frac{\mathbb{E}_{s, \mathbf{x}_s^{\epsilon}}^{\epsilon}\Bigl[ \exp\Bigl( -\frac{2} {\epsilon} \Phi^{\epsilon}(\mathbf{x}^{\epsilon}) \Bigr) \Bigr]} {\mathbb{E}_{s, \mathbf{x}_s^{\epsilon}}^{\epsilon}\Bigl[ \exp\Bigl( -\frac{1} {\epsilon} \Phi^{\epsilon}(\mathbf{x}^{\epsilon}) \Bigr) \Bigr]^2 }.
\end{align}

Note that, as we might expect,  the relative estimation error may decrease with increasing the number of the sample size $N$. However, from Varahhan's lemma (e.g., see \cite{Var85}; see also \cite{Ven73} and \cite{VenFre70}), under suitable assumptions, we also have the following conditions
\begin{align}
  \limsup_{\epsilon \rightarrow 0} \epsilon\,\log  \mathbb{E}_{s, \mathbf{x}_s^{\epsilon}}^{\epsilon}\Bigl[ \exp\Bigl( -\frac{1} {\epsilon} \Phi^{\epsilon}(\mathbf{x}^{\epsilon}) \Bigr) \Bigr]  = -  \inf_{\substack {\varphi \in C_{sT}\bigl([s, T], \mathbb{R}^{nd}\bigr) \\ \varphi(s) = \mathbf{x}_s}} \Bigl \{I (\varphi)  + \Phi^{\epsilon}(\varphi) \Bigr\} \label{Eqx.7}
\end{align}
and 
\begin{align}
  \limsup_{\epsilon \rightarrow 0} \epsilon\,\log  \mathbb{E}_{s, \mathbf{x}_s^{\epsilon}}^{\epsilon}\Bigl[ \exp\Bigl( -\frac{2} {\epsilon} \Phi^{\epsilon}(\mathbf{x}^{\epsilon}) \Bigr) \Bigr]  = -  \inf_{\substack {\varphi \in C_{sT}\bigl([s, T], \mathbb{R}^{nd}\bigr) \\ \varphi(s) = \mathbf{x}_s}} \Bigl \{I (\varphi)  + 2 \Phi^{\epsilon}(\varphi) \Bigr\} \label{Eqx.8}
\end{align}
where $C_{sT}\bigl([s, T], \mathbb{R}^{nd}\bigr)$ is the set of absolutely continuous functions from $[s, T]$ into $\mathbb{R}^{nd}$, with $0 \le s \le t \le T$, and $I(\varphi)$ is the rate functional for the diffusion process $\mathbf{x}_t^{\epsilon}$. From Jensen's inequality, the above equations in \eqref{Eqx.7} and \eqref{Eqx.8} also imply the following condition $\Delta\left(\rho(\epsilon)\right) \ge 1$. 

In the following subsection, i.e., Subsection~\ref{S2(2)} (see also Section~\ref{S3}, the framework that we use to address rare-event simulation problem basically relies on the connection between the probability theory of large deviations (i.e., the asymptotics for exit probabilities) and the values functions for a family of stochastic control problems associated with the underlying distributed system. Note that such a connection provides a computational paradigm -- based on an exponentially-tilted biasing distribution -- for constructing an efficient importance sampling estimators for rare-event simulations that further improves the efficiency of Monte Carlo simulations.

\subsection{Exit probabilities and stochastic optimal control} \label{S2(2)}
Let $\mathbf{x}_t^{\epsilon}$, for $0 \le t \le T$, be the diffusion process associated with \eqref{Eq1} and consider the following boundary value problem
\begin{eqnarray}
\left.\begin{array}{c}
\partial_s \upsilon^{\epsilon} + \mathcal{L}^{\epsilon} \upsilon^{\epsilon} = 0 \quad \text{in} \quad \Omega^{T}  \\
 \upsilon^{\epsilon} \bigl(s, \mathbf{x} \bigr) = 1 \quad \text{on} \quad \Gamma_{T}^{+}  \\
 \upsilon^{\epsilon} \bigl(s, \mathbf{x} \bigr) = 0 \quad \text{on} \quad \{T\} \times \Omega
\end{array}\right\}   \label{Eq9}
\end{eqnarray}
where $\mathcal{L}^{\epsilon}$ is the backward operator in \eqref{Eq5} and
\begin{align*}
 \Gamma_{T}^{+} = \Bigl\{\bigl(s, \mathbf{x} \bigr) \in \Gamma^{+}\, \bigl \vert \, 0 < s \le T \Bigr\}.  
\end{align*}
Further, let $\Omega^{0T}$ be the set consisting of $\Omega^{T} \bigcup \{T\} \times \Omega$, together with the boundary points $\bigl(s, \mathbf{x} \bigr) \in \Gamma^{+}$, with $0< s<T$. Then, the following proposition, whose proof is given in \cite{BefA15}, provides a solution to the exit probability $\mathbb{P}_{s, \mathbf{x}_s^{\epsilon}}^{\epsilon} \bigl\{ \tau^{\epsilon} \le T \bigr\}$ with which the diffusion process $\mathbf{x}_t^{\epsilon}$ exits from the domain $\Omega$.
  
\begin{proposition} \label{P1}
Suppose that the statements~(a)-(c) in the above assumption (i.e., Assumption~\ref{AS1}) hold true. Then, the exit probability $q^{\epsilon} (s, \mathbf{x}^{\epsilon}) = \mathbb{P}_{s, \mathbf{x}_s^{\epsilon}}^{\epsilon}\bigl\{ \tau^{\epsilon} \le T \bigr\}$ is a smooth solution to the boundary value problem in \eqref{Eq9} and, moreover, it is a continuous function on $\Omega^{0T}$.
\end{proposition}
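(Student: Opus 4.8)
The plan is to realize $q^{\epsilon}$ as a classical solution of \eqref{Eq9} by first extracting a weak (distribution) solution from the probabilistic structure of the process, then upgrading it to $C^{\infty}$ via hypoellipticity, and finally treating the attainment of the boundary data separately. I begin with the strong Markov property of $\mathbf{x}_t^{\epsilon}$. For any stopping time $\theta$ with $s \le \theta \le \tau^{\epsilon} \wedge T$, the event $\{\tau^{\epsilon} \le T\}$ depends, on $\{\theta < \tau^{\epsilon}\}$, only on the trajectory after $\theta$, so conditioning on $\mathcal{F}_{\theta}$ yields
\begin{align*}
q^{\epsilon}(s, \mathbf{x}) = \mathbb{E}_{s, \mathbf{x}}^{\epsilon}\bigl[\, q^{\epsilon}\bigl(\theta, \mathbf{x}_{\theta}^{\epsilon}\bigr) \,\bigr].
\end{align*}
Equivalently, $t \mapsto q^{\epsilon}\bigl(t \wedge \tau^{\epsilon}, \mathbf{x}_{t \wedge \tau^{\epsilon}}^{\epsilon}\bigr)$ is a bounded $\mathbb{P}_{s, \mathbf{x}}^{\epsilon}$-martingale; i.e.\ $q^{\epsilon}$ is space--time harmonic for the killed process. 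Testing this identity against $\phi \in C_0^{\infty}(\Omega^{T})$ and using the definition of the generator (Dynkin's formula) shows that $q^{\epsilon}$ is a distribution solution of \eqref{Eq6} in the weak sense of \eqref{Eq7}--\eqref{Eq8}.

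The decisive step is to promote this weak solution to a smooth one. Although $\mathcal{L}^{\epsilon}$ is degenerate -- its second-order part in \eqref{Eq5} acts only on the $x^{1}$-block -- Assumption~\ref{AS1}(b) asserts that $\partial_s + \mathcal{L}^{\epsilon}$ is hypoelliptic on $(0, \infty) \times \Omega$. Consequently, by H\"{o}rmander's theorem every distribution solution of $\partial_s \upsilon^{\epsilon} + \mathcal{L}^{\epsilon} \upsilon^{\epsilon} = 0$ is automatically $C^{\infty}$ there. Hence $q^{\epsilon} \in C^{\infty}(\Omega^{T})$ and satisfies the equation classically. This is precisely where the chain structure of \eqref{Eq1} enters: the transmission of the single noise through the successive subsystems is what supplies the iterated Lie-bracket (H\"{o}rmander) condition that replaces the uniform ellipticity one would otherwise use.

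Next I verify the two boundary data probabilistically. For the terminal face, if $s = T$ then $\tau^{\epsilon} = \inf\{\, t > T : \mathbf{x}_t^{\epsilon} \in \partial\Omega \,\} > T$ almost surely, since the process starts strictly inside $\Omega$; therefore $q^{\epsilon}(T, \mathbf{x}) = \mathbb{P}_{T, \mathbf{x}}^{\epsilon}\{\tau^{\epsilon} \le T\} = 0$ on $\{T\} \times \Omega$. For the lateral datum, at $(s, \mathbf{x}) \in \Gamma_T^{+}$ the outward normal drift satisfies $\langle \mathbf{f}(s, \mathbf{x}),\, n(\mathbf{x}) \rangle > 0$, so the trajectory leaves $\Omega$ instantaneously, giving $\tau^{\epsilon} = s \le T$ and thus $q^{\epsilon} = 1$ there.

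The remaining and most delicate point is continuity of $q^{\epsilon}$ up to the boundary of $\Omega^{0T}$, where the degeneracy of $\mathcal{L}^{\epsilon}$ prevents the classical barrier construction available for nondegenerate operators. Here I would invoke Assumption~\ref{AS1}(c): the accompanying footnote guarantees that $\bigl(\tau^{\epsilon}, \mathbf{x}_{\tau^{\epsilon}}^{\epsilon}\bigr) \in \Gamma^{+} \cup \Gamma^{0}$ with $\tau^{\epsilon} < \infty$ almost surely and that the process does not dwell on the characteristic set $\Gamma^{0}$, so that exit occurs through $\Gamma^{+}$ almost surely. Following the analysis of degenerate elliptic--parabolic operators in \cite{StrVa72}, every point of $\Gamma^{+}$ is then regular for the diffusion: from starting points near such a point the exit time $\tau^{\epsilon}$ tends to $s$ in probability, whence $q^{\epsilon}(s', \mathbf{x}') \to 1$ as $(s', \mathbf{x}') \to (s, \mathbf{x}) \in \Gamma^{+}$, while for $s \to T$ with $\mathbf{x}$ interior the shrinking time window forces $q^{\epsilon} \to 0$. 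Combined with the martingale representation of the first paragraph, this gives continuity on all of $\Omega^{0T}$. The main obstacle throughout is exactly this interplay between degeneracy and boundary geometry: hypoellipticity rescues interior smoothness, while the strict sign of $\langle \mathbf{f}, n \rangle$ on $\Gamma^{+}$ together with the non-dwelling property on $\Gamma^{0}$ is what makes the boundary values be attained continuously.
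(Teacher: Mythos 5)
Your proposal is correct and follows essentially the same route as the paper, which does not spell out the argument but defers it to \cite{BefA15}: there, too, the exit probability is identified as a distribution solution via the (strong) Markov property, upgraded to a $C^{\infty}$ classical solution by the hypoellipticity hypothesis in Assumption~\ref{AS1}(b), and shown to attain the boundary data continuously by the regularity theory for degenerate elliptic--parabolic operators of Stroock and Varadhan \cite{StrVa72}, exactly as in your final paragraph. The only point stated a bit too quickly is the claim that trajectories started at a point of $\Gamma_T^{+}$ leave $\Omega$ ``instantaneously'' -- with noise entering only the $x^{1}$-block this is not a pathwise triviality but is precisely the boundary-point regularity result of \cite[Section~7]{StrVa72} that you invoke later, so the argument stands.
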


Note that, from Proposition~\ref{P1}, the exit probability $q^{\epsilon} \bigl(s, \mathbf{x}^{\epsilon}\bigr)$ is a smooth solution to the boundary value problem in \eqref{Eq9}. Further, if we introduce the following logarithmic transformation (e.g., see \cite{Fle78},\cite{EvaIsh85} or \cite{FlemRi75})
\begin{eqnarray}
 I^{\epsilon} \bigl(s, \mathbf{x}^{\epsilon} \bigr) = -\epsilon \log q^{\epsilon} \bigl(s, \mathbf{x}^{\epsilon} \bigr). \label{Eq17}
\end{eqnarray}
Then, using ideas from stochastic control theory (see \cite{Fle78} for similar arguments), we present results useful for proving the following asymptotic property 
\begin{align}
 I^{\epsilon} \bigl(s, \mathbf{x}^{\epsilon} \bigr) \rightarrow I^{0} \bigl(s, \mathbf{x}^{\epsilon}\bigr) \quad \text{as} \quad \epsilon \rightarrow 0. \label{Eq27}
\end{align}
The starting poin for such an analysis is to introduce a family of related stochastic control problems whose dynamic programming equation, for $\epsilon > 0$, is given below by \eqref{Eq31}. Then, this also allows us to reinterpret the exit probability function as a value function for a family of stochastic control problems that are associated with the underlying distributed system with small random perturbation. 

In what follows, we consider the following boundary value problem
\begin{eqnarray}
\left.\begin{array}{c}
\partial_s g^{\epsilon} + \frac{\epsilon}{2} \operatorname{tr}\Bigl \{a\, g_{x^{1}x^{1}}^{\epsilon} \Bigr\} + \sum\nolimits_{j=1}^n \bigl\langle f_{j},\, g_{x^{j}}^{\epsilon}  \bigr\rangle = 0 \quad \text{in} \quad \Omega^{T} \\
g^{\epsilon} = \mathbb{E}_{s, \mathbf{x}}^{\epsilon} \Bigl \{ \exp\Bigl(-\frac{1} {\epsilon} \Phi^{\epsilon} \Bigr) \Bigr\} \quad \text{on} \quad \partial^{\ast} \Omega^{T}
\end{array}\right\}  \label{Eq28} 
\end{eqnarray}
where $\Phi^{\epsilon} \bigl(s, \mathbf{x}^{\epsilon} \bigr)$ is a bounded, nonnegative Lipschitz function such that
\begin{eqnarray}
\Phi^{\epsilon} \bigl(s, \mathbf{x}^{\epsilon} \bigr) =0, \quad \forall \bigl(s, \mathbf{x}^{\epsilon} \bigr) \in \Gamma_{T}^{+}. \label{Eq29} 
\end{eqnarray}
Observe that the function $g^{\epsilon} \bigl(s, \mathbf{x}^{\epsilon} \bigr)$ is a smooth solution in $\Omega^{T}$ to the backward operator in \eqref{Eq5}; and it is also continuous on $\partial^{\ast} \Omega^{T}$. Moreover, if we introduce the following logarithm transformation (cf. Equation~\eqref{Eq17})
\begin{eqnarray}
  J^{\epsilon} \bigl(s, \mathbf{x}^{\epsilon} \bigr) = - \epsilon \log g^{\epsilon}\bigl(s, \mathbf{x}^{\epsilon} \bigr). \label{Eq30} 
\end{eqnarray}
Then,  $J^{\epsilon}\bigl(s, \mathbf{x}^{\epsilon}\bigr)$ satisfies the following dynamic programming equation
\begin{eqnarray}
\partial_s J^{\epsilon} + \frac{\epsilon}{2} \operatorname{tr}\Bigl \{a\, J_{x^{\epsilon,1} x^{\epsilon,1}}^{\epsilon} \Bigr\} + \sum\nolimits_{j=1}^n \bigl\langle f_{j},\,  J_{x^{\epsilon, j}}^{\epsilon} \bigr\rangle + H^{\epsilon} = 0, \quad  \text{in} \quad  \Omega^{T},  \label{Eq31}
\end{eqnarray}
where $H^{\epsilon} = H^{\epsilon}\bigl(s, \mathbf{x}^{\epsilon}, J_{x^{\epsilon,1}}^{\epsilon} \bigr)$ is given by
\begin{eqnarray}
H^{\epsilon}\bigl(s, \mathbf{x}^{\epsilon}, J_{x^{\epsilon,1}}^{\epsilon} \bigr) = \bigl\langle f_{1}(s, \mathbf{x}^{\epsilon}) ,\,  J_{x^{\epsilon,1}}^{\epsilon} \bigr\rangle - \frac{1}{2} \Bigl(J_{x^{\epsilon,1}}^{\epsilon}\Bigr)^T a(s, \mathbf{x}^{\epsilon}) J_{x^{\epsilon,1}}^{\epsilon}.  \label{Eq32}
\end{eqnarray}
Note that the duality relation between $H^{\epsilon} \bigl(s, \mathbf{x}^{\epsilon},\,\cdot\bigr)$ and $L^{\epsilon} \bigl(s, \mathbf{x}^{\epsilon},\,\cdot\bigr)$, i.e.,
\begin{eqnarray}
H^{\epsilon} \bigl(s, \mathbf{x}^{\epsilon}, J_{x^{\epsilon,1}}^{\epsilon}\bigr) = \inf_{\hat{u}} \biggl \{L^{\epsilon} \bigl(s, \mathbf{x}^{\epsilon}, \hat{u} \bigr) + \bigl\langle J_{x^{\epsilon,1}}^{\epsilon},\, \hat{u} \bigr\rangle \biggr\}, \label{Eq33}
\end{eqnarray}
with
\begin{align*}
L^{\epsilon} \bigl(s, \mathbf{x}^{\epsilon}, \hat{u} \bigr) = \frac{1}{2} \Bigl \Vert f_{1}(s, \mathbf{x}^{\epsilon}) - \hat{u} \Bigr \Vert_{[a(s, \mathbf{x}^{\epsilon})]^{-1}}^2,
\end{align*}
where $\Vert\,\cdot \,\Vert_{[a(s, \mathbf{x}^{\epsilon})]^{-1}}^2$ denotes the Riemannian norm of a tangent vector.

Then, it is easy to see that $J^{\epsilon}\bigl(s, \mathbf{x}^{\epsilon} \bigr)$ is a solution in  $\Omega^{T}$, with $J^{\epsilon}=\Phi^{\epsilon}$ on $\partial^{\ast} \Omega^{T}$, to the dynamic programming in \eqref{Eq31}, where the latter is associated with the following stochastic control problem 
\begin{align}
 J^{\epsilon} \bigl(s, \mathbf{x}^{\epsilon}\bigr) = \inf_{\hat{u} \in \hat{U}(s, \mathbf{x}_s^{\epsilon})} \mathbb{E}_{s ,\mathbf{x}_s^{\epsilon}}\Biggl\{ \int_{s}^{\theta}  L^{\epsilon} \bigl(s,\mathbf{x}^{\epsilon}, \hat{u} \bigr)dt + \Phi^{\epsilon} \bigl(\theta, \mathbf{x}^{\epsilon} \bigr) \Biggr\}  \label{Eq34}
\end{align}
that corresponds to the following system of SDEs
\begin{eqnarray}
\left.\begin{array}{l}
d x_t^{\epsilon,1} = \hat{u}(t) dt + \sqrt{\epsilon} \,\sigma \bigl(t, \mathbf{x}^{\epsilon}(t)\bigr) dW_t \\
d x_t^{\epsilon,2} = f_2(t, \mathbf{x}_t^{\epsilon}) dt  \\
d x_t^{\epsilon,3} = f_3(t, \mathbf{x}_t^{\epsilon, 3,n}) dt  \\
 \quad\quad\quad~~ \vdots  \\
d x_t^{\epsilon,n} = f_n(t, \mathbf{x}_t^{\epsilon, n-1,n}) dt \\
~ \\
\quad \quad \text{with an initial condition} \quad \mathbf{x}_s^{\epsilon}
\end{array}\right. \label{Eq35} 
\end{eqnarray}
 where $\mathbf{x}_t^{\epsilon, i,n} = (x^{\epsilon, i},\, \ldots, \, x^{\epsilon,n})$, with $i \ge 3$, and $\hat{U}(s, \mathbf{x}^{\epsilon})$ is a class of continuous functions for which $\theta \le T$ and  $(\theta, x_{\theta}^{\epsilon})\in \Gamma_{T}^{+}$. 

In what follows, we provide bounds (i.e., the asymptotic lower and upper bounds) on the exit probability $q^{\epsilon} \bigl(s, \mathbf{x}^{\epsilon}\bigr)$.

Define
\begin{eqnarray}
 I_{\Omega}^{\epsilon} \bigl(\bigl(s, \mathbf{x}^{\epsilon}\bigr); \, \partial \Omega \bigr) &=& -\lim_{\epsilon \rightarrow 0} \epsilon\,\log \mathbb{P}_{s, \mathbf{x}_s}^{\epsilon} \bigl \{ \mathbf{x}_{\theta}^{\epsilon} \in \partial \Omega \bigl\}, \notag \\
 & \triangleq& -\lim_{\epsilon \rightarrow 0} \epsilon\,\log q^{\epsilon} \bigl(s, \mathbf{x}^{\epsilon} \bigr), \label{Eq36}
\end{eqnarray}
where $\theta$ (or $\theta=\tau^{\epsilon} \wedge T$) is the first exit-time of $\mathbf{x}_t^{\epsilon}$ from the domain $\Omega$. Furthermore, let us introduce the following supplementary minimization problem 
\begin{eqnarray}
\tilde{I}_{\Omega}^{\epsilon} \bigl(s, \varphi, \theta\bigr) = \inf_{\varphi \in C_{sT}\bigl([s, T], \mathbb{R}^d\bigr), \theta \ge s} \int_{s}^{\theta} L^{\epsilon}\big(t, \varphi(t),\dot{\varphi}(t)\big)dt, \label{Eq37}
\end{eqnarray}
where the infimum is taken among all $\varphi(\cdot) \in C_{sT}\bigl([s, T], \mathbb{R}^d\bigr)$ (i.e., from the space of $\mathbb{R}^d$-valued (locally) absolutely continuous functions, with $\int_{s}^T \bigl\vert \dot{\varphi}(t) \bigr\vert^2 dt < \infty$ for each $T > s$) and $\theta \ge s > 0$ such that $\varphi(s)=x_s^{\epsilon,1}$, $\bigl(t, \varphi(t), \mathbf{x}_t^{\epsilon,2, n}\bigr) \in \Omega^{T}$, for all $t \in [s,\,\theta)$, and $\bigl(\theta, \varphi(\theta), \mathbf{x}_{\theta}^{\epsilon,2,n} \bigr) \in \Gamma_{T}^{+}$. Then, it is easy to see that
\begin{eqnarray}
 \tilde{I}_{\Omega}^{\epsilon} \bigl(s, \varphi, \theta\bigr) = I_{\Omega}^{\epsilon} \bigl(\bigl(s, \mathbf{x}^{\epsilon} \bigr); \,\partial \Omega \bigr). \label{Eq38}
\end{eqnarray}

Next, we state the following lemma that will be useful for proving Proposition~\ref{P2} (cf. \cite[Lemma~3.1]{Fle78}).
\begin{lemma} \label{L2} 
If $\varphi \in C_{sT}\bigl([s, T], \mathbb{R}^d\bigr)$, for $s> 0$, and $\varphi(s)=x_s^{\epsilon,1}$, $\bigl(t, \varphi(t), \mathbf{x}_t^{\epsilon,2, n} \bigr) \in \Omega^{T}$, for all $t \in [s,T)$, then $\lim_{T \rightarrow \infty} \int_{s}^{T} L^{\epsilon,\ell}\big(t,\varphi(t),\dot{\varphi}(t)\big)dt = +\infty$.
\end{lemma}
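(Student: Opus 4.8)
The plan is to argue by contradiction, first reducing the claim to a coercivity estimate for the Lagrangian and then playing the cost of confinement off against the outward drift. To begin, by Assumption~\ref{AS1}(a) the map $\sigma$ is bounded, so $a = \sigma\sigma^T$ satisfies $\lambda I \le a \le \Lambda I$ on $[0,\infty)\times\mathbb{R}^{nd}$ for some $\Lambda > 0$; consequently $a^{-1} \ge \Lambda^{-1} I$, and the running cost $L^{\epsilon,\ell}$ entering \eqref{Eq37}, which by the duality relation \eqref{Eq33} is of the form $\tfrac12\Vert f_1 - \dot\varphi\Vert_{[a]^{-1}}^2$, obeys the pointwise lower bound
\begin{align*}
L^{\epsilon,\ell}\bigl(t,\varphi(t),\dot\varphi(t)\bigr) \ge \frac{1}{2\Lambda}\bigl\vert f_1\bigl(t,\varphi(t),\mathbf{x}_t^{\epsilon,2,n}\bigr) - \dot\varphi(t)\bigr\vert^2 .
\end{align*}
Hence it suffices to show that $\int_s^{\infty}\vert f_1 - \dot\varphi\vert^2\,dt = +\infty$. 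Suppose, toward a contradiction, that $g := \dot\varphi - f_1 \in L^2([s,\infty);\mathbb{R}^d)$, with $\int_s^\infty\vert g\vert^2\,dt = M < \infty$; here I use that, as in \eqref{Eq35}, the remaining components $\mathbf{x}_t^{\epsilon,2,n}$ evolve by their own noiseless, uncontrolled drift $\dot x^{i} = f_i$, so that $\dot\varphi$ is the only velocity permitted to deviate from $\mathbf{f}$.

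The decisive ingredient is the outward character of the drift encoded in Assumption~\ref{AS1}(c) and the accompanying footnote: trajectories of the deterministic field $\mathbf{f}$ cannot be trapped in the bounded domain $\Omega$ but must reach $\Gamma^{+}$. I would use this to produce a $C^1$ progress function $\psi$ on the compact set $\bar\Omega$, bounded together with $\nabla\psi$, and satisfying $\langle \mathbf{f}(t,\mathbf{x}),\nabla\psi(\mathbf{x})\rangle \ge \beta > 0$ for all $(t,\mathbf{x})$ in the relevant compact region (for instance a suitably mollified version of the drift exit-time function, whose derivative along the flow equals $-1$). Evaluating $\psi$ along the confined trajectory and splitting $\dot\varphi = f_1 + g$ gives
\begin{align*}
\frac{d}{dt}\psi\bigl(\varphi(t),\mathbf{x}_t^{\epsilon,2,n}\bigr) = \bigl\langle \nabla\psi,\, \mathbf{f}\bigr\rangle + \bigl\langle \nabla_{x^{1}}\psi,\, g(t)\bigr\rangle \ge \beta - C\,\vert g(t)\vert ,
\end{align*}
where $C = \sup_{\bar\Omega}\vert\nabla_{x^{1}}\psi\vert$. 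Integrating over $[s,T]$ and applying the Cauchy--Schwarz inequality to the error term yields
\begin{align*}
\psi\bigl(\varphi(T),\mathbf{x}_T^{\epsilon,2,n}\bigr) - \psi\bigl(\varphi(s),\mathbf{x}_s^{\epsilon,2,n}\bigr) \ge \beta\,(T-s) - C\sqrt{T-s}\,\sqrt{M},
\end{align*}
whose right-hand side tends to $+\infty$ as $T\to\infty$. Since the left-hand side stays bounded (because $\psi$ is bounded on $\bar\Omega$ and the trajectory remains in $\Omega^{T}$ by hypothesis), this is the desired contradiction; therefore $\int_s^\infty\vert f_1 - \dot\varphi\vert^2\,dt = +\infty$, and combining with the first display gives $\lim_{T\to\infty}\int_s^T L^{\epsilon,\ell}\,dt = +\infty$.

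The main obstacle is the construction in the second paragraph: converting the qualitative fact that no drift trajectory is trapped in $\Omega$ (Assumption~\ref{AS1}(c) together with the exit property recorded in the footnote) into a quantitative, uniformly-positive progress function $\psi$ of class $C^1$ on $\bar\Omega$. The delicate points are the uniformity of the lower bound $\beta$ (one must rule out trajectories that linger arbitrarily long near $\Gamma^{0}$, where $\langle\mathbf{f},n\rangle = 0$) and the regularity of $\psi$, since the raw exit-time function is typically only Lipschitz and may require mollification. It is precisely here that the classification of $\partial\Omega$ into $\Gamma^{+}$ and $\Gamma^{0}$ and the underlying strong-accessibility structure enter, exactly as in \cite[Lemma~3.1]{Fle78}. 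Once such a $\psi$ is available, the remaining Gr\"{o}nwall/Cauchy--Schwarz estimate is routine, and the coercivity reduction of the first paragraph is immediate from the uniform two-sided bounds on $a$.
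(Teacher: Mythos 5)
The first thing to note is that the paper contains no proof of this lemma: it is stated with a bare pointer to \cite[Lemma~3.1]{Fle78}, so the comparison is necessarily with Fleming's argument. There the divergence of the action rests on an explicit standing hypothesis that every trajectory of the unperturbed flow $\dot{\xi} = \mathbf{f}(t,\xi)$ starting in $\bar{\Omega}$ leaves $\bar{\Omega}$ within some fixed time $T_1$, and the proof is a compactness/lower-semicontinuity argument: if no $\delta > 0$ bounded from below the action of arcs confined to $\bar{\Omega}$ on an interval of length $T_1$, a minimizing sequence would converge to a zero-action arc, i.e.\ a trajectory of $\mathbf{f}$ trapped in $\bar{\Omega}$ for time $T_1$, a contradiction; summing $\delta$ over consecutive blocks of length $T_1$ then gives linear growth of the action. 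Your opening reduction is sound: the two-sided bounds on $a$ from Assumption~\ref{AS1}(a) do give $L^{\epsilon} \ge \frac{1}{2\Lambda}\vert f_1 - \dot{\varphi}\vert^2$, you treat the degenerate structure correctly (only the first component may deviate from the drift, the components $\mathbf{x}_t^{\epsilon,2,n}$ being slaved as in \eqref{Eq35}), and the Cauchy--Schwarz endgame is fine.

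The genuine gap is exactly where you flag it, and it is not a repairable technicality within the paper's stated hypotheses: the progress function $\psi$ \emph{is} the entire content of the lemma, and nothing in the paper supplies it. Assumption~\ref{AS1}(c) only classifies boundary points into $\Gamma^{+}$ and $\Gamma^{0}$, and the footnote asserts almost-sure exit of the diffusion $\mathbf{x}_t^{\epsilon}$ for $\epsilon > 0$, which does not imply exit of the deterministic flow: noise can force exit even when $\mathbf{f}$ has an attractor inside $\Omega$, and in that case the conclusion of the lemma fails outright for the zero-cost trajectory $\dot{\varphi} = f_1$ (with the slaved components following suit). So any proof must invoke a deterministic exit hypothesis, as Fleming does explicitly. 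Your $\psi$, with $\langle \mathbf{f}(t,\mathbf{x}), \nabla\psi(\mathbf{x})\rangle \ge \beta > 0$ on $\bar{\Omega}$ uniformly in $t$, is strictly stronger even than that hypothesis: a time-independent $C^1$ function with uniformly positive derivative along a time-dependent field need not exist, and your suggested construction (a mollified exit-time function) breaks down precisely near $\Gamma^{0}$, where the exit time of the flow is generically discontinuous --- not merely Lipschitz --- so mollification does not preserve a uniform $\beta$. Fleming's block argument circumvents all of this: compactness and lower semicontinuity of the action absorb the tangential boundary behavior and the time dependence without manufacturing any smooth Lyapunov function. As written, your proof establishes the implication ``existence of $\psi$ implies the lemma''; the step from the paper's assumptions to $\psi$ --- or even to the weaker uniform exit hypothesis --- is missing, and it is the whole difficulty.
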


Consider again the stochastic control problem in \eqref{Eq34} together with \eqref{Eq35}. Suppose that  $\Phi_M^{\epsilon}$ (with $\Phi_M^{\epsilon} \ge 0$) is class $C^2$ such that $\Phi_M^{\epsilon} \rightarrow +\infty$ as $M \rightarrow \infty$ uniformly on any compact subset of $\Omega^{T} \setminus \bar{\Gamma}_{T}^{+}$ and $\Phi_M^{\epsilon}$ on $\Gamma_{T}^{+}$. Further, if we let $J^{\epsilon} = J_{\Phi_M}^{\epsilon}$, when  $\Phi^{\epsilon} = \Phi_M^{\epsilon}$, then we have the following lemma.
\begin{lemma} \label{L3}
Suppose that Lemma~\ref{L2} holds, then we have
\begin{align}
 \liminf_{\substack{M \rightarrow \infty \\ \bigl(t, \mathbf{x}_t^{\epsilon} \bigr) \rightarrow \bigl(s, \mathbf{x}_s^{\epsilon} \bigr)}} J_{\Phi_M}^{\epsilon}(\bigl(s, \mathbf{x}^{\epsilon} \bigr))  \ge I^{\epsilon} \bigl(s, \mathbf{x}^{\epsilon} \bigr). \label{Eq39}
\end{align}
\end{lemma}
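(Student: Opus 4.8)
The plan is to exploit the fact that both the log-transformed exit probability $I^{\epsilon}$ of \eqref{Eq17} and each penalized value function $J_{\Phi_M}^{\epsilon}$ of \eqref{Eq30} solve the \emph{same} dynamic programming equation \eqref{Eq31} in the interior $\Omega^{T}$, and differ only through their boundary data on $\partial^{\ast}\Omega^{T}$. On $\Gamma_{T}^{+}$ both vanish (recall $\Phi_M^{\epsilon}=0$ there while $q^{\epsilon}=1$, so $I^{\epsilon}=0$), whereas on $\{T\}\times\Omega$ one has $J_{\Phi_M}^{\epsilon}=\Phi_M^{\epsilon}<\infty$ against $I^{\epsilon}=+\infty$ (since $q^{\epsilon}=0$ there). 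I would first record the \textbf{monotonicity} that drives the whole argument: because $g_{\Phi_M}^{\epsilon}(s,\mathbf{x})=\mathbb{E}_{s,\mathbf{x}}^{\epsilon}\bigl[\exp(-\tfrac{1}{\epsilon}\Phi_M^{\epsilon}(\theta,\mathbf{x}_{\theta}^{\epsilon}))\bigr]$ and $\Phi_M^{\epsilon}\uparrow+\infty$ as $M\to\infty$, the map $M\mapsto g_{\Phi_M}^{\epsilon}$ is nonincreasing, hence $M\mapsto J_{\Phi_M}^{\epsilon}=-\epsilon\log g_{\Phi_M}^{\epsilon}$ is nondecreasing.

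Next I would establish the \textbf{pointwise limit} $J_{\Phi_M}^{\epsilon}(s,\mathbf{x})\uparrow I^{\epsilon}(s,\mathbf{x})$ as $M\to\infty$. Splitting the representation according to whether the process leaves $\Omega$ through the lateral boundary before time $T$ or is still inside at time $T$, one obtains $g_{\Phi_M}^{\epsilon}=q^{\epsilon}+\mathbb{E}_{s,\mathbf{x}}^{\epsilon}\bigl[\exp(-\tfrac{1}{\epsilon}\Phi_M^{\epsilon}(T,\mathbf{x}_T^{\epsilon}))\mathbf{1}_{\{\tau^{\epsilon}\ge T\}}\bigr]$; the second term is dominated by $\mathbf{1}_{\{\tau^{\epsilon}\ge T\}}$ and tends to $0$ by dominated convergence, since $\Phi_M^{\epsilon}\to+\infty$ on compact subsets of $\Omega^{T}\setminus\bar{\Gamma}_T^{+}$. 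This yields $g_{\Phi_M}^{\epsilon}\downarrow q^{\epsilon}$, and therefore $J_{\Phi_M}^{\epsilon}\uparrow I^{\epsilon}$. It is precisely here that Lemma~\ref{L2} enters through the dual control picture \eqref{Eq34}: it guarantees that trajectories which fail to leave $\Omega^{T}$ accumulate unbounded running cost, so that the infimum in \eqref{Eq34} is not lowered by nonexiting paths and the limiting value coincides with the cost of exiting through $\Gamma_T^{+}$, i.e.\ with $I^{\epsilon}$.

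With monotonicity and the pointwise limit in hand, the \textbf{joint limit} requires no uniform-convergence estimate. Fix $M_0$. For every $M\ge M_0$ one has $J_{\Phi_M}^{\epsilon}\ge J_{\Phi_{M_0}}^{\epsilon}$ throughout $\Omega^{T}$, and $J_{\Phi_{M_0}}^{\epsilon}$ is continuous at $\bigl(s,\mathbf{x}_s^{\epsilon}\bigr)$: indeed $g_{\Phi_{M_0}}^{\epsilon}$ is a strictly positive smooth solution of \eqref{Eq28} with the $C^2$ boundary datum $\Phi_{M_0}^{\epsilon}$, so by the hypoellipticity in Assumption~\ref{AS1}(b) and the regularity of Proposition~\ref{P1} its logarithm $J_{\Phi_{M_0}}^{\epsilon}$ is continuous there. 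Consequently, for any sequence with $M_k\to\infty$ and $\bigl(t_k,\mathbf{x}_{t_k}^{\epsilon}\bigr)\to\bigl(s,\mathbf{x}_s^{\epsilon}\bigr)$, eventually $M_k\ge M_0$, whence $\liminf_k J_{\Phi_{M_k}}^{\epsilon}(t_k,\mathbf{x}_{t_k}^{\epsilon})\ge\liminf_k J_{\Phi_{M_0}}^{\epsilon}(t_k,\mathbf{x}_{t_k}^{\epsilon})=J_{\Phi_{M_0}}^{\epsilon}\bigl(s,\mathbf{x}_s^{\epsilon}\bigr)$. Letting $M_0\to\infty$ and invoking the pointwise limit gives the claimed bound \eqref{Eq39}.

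The \textbf{main obstacle} I anticipate is the pointwise convergence step rather than the joint-limit bookkeeping: one must rule out that mass lingering inside $\Omega$ keeps $g_{\Phi_M}^{\epsilon}$ strictly above $q^{\epsilon}$ in the limit, and must secure continuity of the penalized value functions up to $\Gamma_T^{+}$. Both rest on the degeneracy being confined to the first subsystem together with the hypoellipticity/strong-Feller property, which supply the transition-density regularity needed to justify the dominated convergence and the continuity of $J_{\Phi_{M_0}}^{\epsilon}$; Lemma~\ref{L2} provides the complementary control-theoretic fact that nonexiting trajectories are never optimal.
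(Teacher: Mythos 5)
The paper never actually proves Lemma~\ref{L3}: it is stated bare, with the surrounding text deferring to Fleming \cite{Fle78} (the analogue there of his Lemma~3.1--3.2 machinery), so the fair comparison is with that variational argument. Your route is genuinely different and, at fixed $\epsilon>0$, more elementary. Fleming's proof works through the control representation \eqref{Eq34}: one takes nearly optimal trajectories for $J_{\Phi_{M_k}}^{\epsilon}$ from nearby starting points, uses the penalty blow-up together with Lemma~\ref{L2} to force any minimizing sequence to exit through $\Gamma_T^{+}$ in bounded time, and concludes by lower semicontinuity of the action functional. You instead bypass the control problem entirely and argue on the probabilistic representation, writing $g_{\Phi_M}^{\epsilon}(s,\mathbf{x}) = q^{\epsilon}(s,\mathbf{x}) + \mathbb{E}_{s,\mathbf{x}}^{\epsilon}\bigl[ e^{-\Phi_M^{\epsilon}(T,\mathbf{x}_T^{\epsilon})/\epsilon} \mathbf{1}_{\{\tau^{\epsilon}>T\}}\bigr]$, killing the second term by dominated convergence (hence $g_{\Phi_M}^{\epsilon}\downarrow q^{\epsilon}$, i.e. $J_{\Phi_M}^{\epsilon}\uparrow I^{\epsilon}$), and handling the joint limit in $(M,(t,\mathbf{x}_t^{\epsilon}))$ by a clean Dini-type device: monotonicity in $M$ plus continuity of each fixed $J_{\Phi_{M_0}}^{\epsilon}$ (available from hypoellipticity, Assumption~\ref{AS1}(b), and the regularity asserted after \eqref{Eq28}). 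Your decomposition is correct (on $\{\tau^{\epsilon}\le T\}$ the exit point lies in $\Gamma_T^{+}$ up to the $\Gamma^{0}$ null event covered by Assumption~\ref{AS1}(c), where $\Phi_M^{\epsilon}=0$), the direction of the inequality is right since $g_{\Phi_M}^{\epsilon}\ge q^{\epsilon}$ gives $J_{\Phi_M}^{\epsilon}\le I^{\epsilon}$ for each $M$, and your argument buys brevity and avoids any compactness or lower-semicontinuity analysis; Fleming's buys independence from monotonicity of the penalty family and survives where no Feynman--Kac representation is available.

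Three caveats you should make explicit. First, your joint-limit step requires $M\mapsto\Phi_M^{\epsilon}$ nondecreasing, which the paper does not assume -- it only asks that $\Phi_M^{\epsilon}\to+\infty$ uniformly on compact subsets of $\Omega^{T}\setminus\bar{\Gamma}_T^{+}$. This is fixable (one may choose the penalties monotone, as Fleming effectively does, or replace $\Phi_{M_0}^{\epsilon}$ by $\inf_{M\ge M_0}\Phi_M^{\epsilon}$ inside the expectation, where no smoothness is needed and Feller continuity still yields continuity of the resulting lower bound), but without some such repair the step $J_{\Phi_{M_k}}^{\epsilon}\ge J_{\Phi_{M_0}}^{\epsilon}$ fails and you would instead need a locally uniform version of the dominated-convergence estimate, e.g. via continuity of $q^{\epsilon}$ from Proposition~\ref{P1}. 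Second, on $\{\tau^{\epsilon}>T\}$ the point $(T,\mathbf{x}_T^{\epsilon})$ lies in $\{T\}\times\Omega$, which is not contained in the open set $\Omega^{T}$; your dominated-convergence step silently reads the blow-up hypothesis on compact subsets including the terminal slice, which is surely the intended meaning of the paper's (garbled) assumption but deserves a sentence. Third, as you partly concede, Lemma~\ref{L2} does no actual work in your proof -- the finite horizon plus dominated convergence replaces it -- whereas it is essential in the variational route the paper points to; since the lemma is phrased as ``Suppose that Lemma~\ref{L2} holds,'' this is mathematically harmless (your proof simply needs less) but worth flagging as a departure from the intended argument.
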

Then, we have the following result.
\begin{proposition} \label{P2}
Suppose that Lemma~\ref{L2} holds, then we have
\begin{eqnarray}
  I^{\epsilon} \bigl(s, \mathbf{x}^{\epsilon} \bigr) \rightarrow I^{0} \bigl(s, \mathbf{x}^{\epsilon} \bigr) \quad \text{as} \quad \epsilon \rightarrow 0, \label{Eq40}
\end{eqnarray}
uniformly for all $\bigl(s, x^{\epsilon,1}(s), x^{\epsilon,2}(s), \ldots, x^{\epsilon,\ell}(s) \bigr)$ in any compact subset $\bar{\Omega}_{\ell}^T$. 
\end{proposition}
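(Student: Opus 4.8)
The plan is to prove the convergence \eqref{Eq40} by establishing the two-sided bound $\limsup_{\epsilon\to 0} I^{\epsilon}(s,\mathbf{x}^{\epsilon}) \le I^{0}(s,\mathbf{x}^{\epsilon}) \le \liminf_{\epsilon\to 0} I^{\epsilon}(s,\mathbf{x}^{\epsilon})$, working through the control representation \eqref{Eq34}--\eqref{Eq35} and the penalized value functions $J_{\Phi_M}^{\epsilon}$. Here $I^{0}$ is the value of the limiting deterministic control problem obtained from \eqref{Eq35} by deleting the $\sqrt{\epsilon}$-noise, equivalently the Freidlin--Wentzell action minimized over controlled paths of \eqref{Eq35} that exit $\Omega$ through $\Gamma_{T}^{+}$ before time $T$; its finiteness and the fact that minimizing paths must genuinely exit are secured by Lemma~\ref{L2}. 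Since the logarithmic transform $J^{\epsilon}=-\epsilon\log g^{\epsilon}$ converts \eqref{Eq28} into the dynamic programming equation \eqref{Eq31}, whose formal $\epsilon\to 0$ limit is the first-order Hamilton--Jacobi equation with Hamiltonian $H^{0}(s,\mathbf{x},p)=\langle f_{1},p\rangle-\tfrac12\,p^{T}a\,p$, the entire argument amounts to making this vanishing-viscosity limit rigorous and uniform, with the hard exit constraint recovered through the penalization $\Phi_M^{\epsilon}$.

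For the lower bound I would first fix $M$ and let $\epsilon\to 0$ in the \emph{smooth} penalized problem. Because $\Phi_M^{\epsilon}$ is $C^{2}$ and bounded on the relevant set, the boundary data in \eqref{Eq28} are smooth, $J_{\Phi_M}^{\epsilon}$ is a classical solution of \eqref{Eq31}, and the stochastic control value \eqref{Eq34} converges to the deterministic control value $J_{\Phi_M}^{0}$ as $\epsilon\to 0$: along any fixed near-optimal control the running cost $\int_{s}^{\theta}L^{\epsilon}\,dt$ and the trajectories of \eqref{Eq35} converge to their noiseless counterparts, the $\sqrt{\epsilon}$-fluctuation contributing only $o(1)$ to the expected cost. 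As $\Phi_M^{\epsilon}$ is dominated by the hard exit constraint (terminal cost $0$ on $\Gamma_{T}^{+}$ and $+\infty$ otherwise), monotonicity of the value in its terminal data gives $J_{\Phi_M}^{\epsilon}\le I^{\epsilon}$, hence $\liminf_{\epsilon\to 0} I^{\epsilon}\ge J_{\Phi_M}^{0}$. Letting $M\to\infty$ and using that $J_{\Phi_M}^{0}\uparrow I^{0}$ (the deterministic analogue of Lemma~\ref{L3}, together with the trivial bound $J_{\Phi_M}^{0}\le I^{0}$) yields $\liminf_{\epsilon\to 0} I^{\epsilon}\ge I^{0}$.

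For the reverse inequality I would argue directly on the exit probability, i.e.\ produce a lower bound on $q^{\epsilon}$. Given $\delta>0$, choose a controlled path $\varphi$ of \eqref{Eq35}, with control $\hat{u}$ steering $\mathbf{x}^{\epsilon}$ out of $\Omega$ through $\Gamma_{T}^{+}$ at some $\theta\le T$ and with action within $\delta$ of $I^{0}$; Assumption~\ref{AS1}(b) (hypoellipticity) guarantees such an exit-steering control exists even though the noise enters only the first block. A standard Ventcel--Freidlin tube estimate around $\varphi$ then gives $\mathbb{P}_{s,\mathbf{x}_{s}^{\epsilon}}^{\epsilon}\{\tau^{\epsilon}\le T\}\ge \exp(-(I^{0}+\delta)/\epsilon)$ for $\epsilon$ small, so that $I^{\epsilon}=-\epsilon\log q^{\epsilon}\le I^{0}+\delta+o(1)$, and since $\delta$ is arbitrary, $\limsup_{\epsilon\to 0} I^{\epsilon}\le I^{0}$. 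Combining the two bounds gives the pointwise convergence in \eqref{Eq40}.

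Uniformity on a compact $\bar{\Omega}_{\ell}^{T}$ I would obtain by upgrading both bounds with equicontinuity: the family $\{I^{\epsilon}\}$ is locally equibounded (it is nonnegative and bounded above by $I^{0}+o(1)$) and equicontinuous in $(s,\mathbf{x})$, so by Arzel\`{a}--Ascoli every sequence has a uniformly convergent subsequence whose limit the pointwise identification forces to be $I^{0}$, whence the full family converges uniformly. The main obstacle is the \emph{degeneracy}: the second-order term $\tfrac{\epsilon}{2}\operatorname{tr}\{a\,J_{x^{1}x^{1}}^{\epsilon}\}$ acts only on the first of the $n$ blocks, so no uniform ellipticity is available and the $\epsilon\to 0$ passage, the equicontinuity estimate, and the tube construction must all be pushed through the chain structure of \eqref{Eq35}, relying on Assumption~\ref{AS1}(b) to ensure that the single forced first component actually propagates so as to move and exit the entire $\mathbb{R}^{nd}$-valued process. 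Showing that the degenerate cost still produces a finite, attainable exit action along the chain is where the real difficulty lies.
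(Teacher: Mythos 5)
Your overall architecture is the same Fleming-style argument the paper sketches: two one-sided bounds, obtained from the penalized value functions $J_{\Phi_M}^{\epsilon}$ together with Lemma~\ref{L2}/Lemma~\ref{L3} on one side and Ventcel--Freidlin estimates on the other. You assign the two tools in the opposite way to the paper: the paper invokes the Ventcel--Freidlin estimates for the probability upper bound \eqref{Eq41} and the penalization for the probability lower bound \eqref{Eq42}, whereas you use the penalization for $\liminf_{\epsilon\to 0} I^{\epsilon}\ge I^{0}$ (which is \eqref{Eq41}) and the tube estimate for $\limsup_{\epsilon\to 0} I^{\epsilon}\le I^{0}$ (which is \eqref{Eq42}). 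Your assignment is actually the coherent one: the comparison in \eqref{Eq43} gives $J^{\epsilon}_{\Phi_M}\le I^{\epsilon}$, which can only ever produce a bound of the type \eqref{Eq41} (as your chain correctly does), while the tube construction is precisely the large-deviation lower bound and delivers \eqref{Eq42}; the paper's sketch, read literally, derives the \eqref{Eq41}-type inequality from \eqref{Eq43}--\eqref{Eq44} while labelling it \eqref{Eq42}. Your order of limits ($\epsilon\to 0$ at fixed $M$, then $M\to\infty$ with $\sup_M J^{0}_{\Phi_M}=I^{0}$ secured by Lemma~\ref{L2}) is also clean, since each step is an inequality pointing in the needed direction and no interchange of limits is required for that half.

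The genuine gap is the step you dispatch in one sentence: the vanishing-viscosity convergence $J^{\epsilon}_{\Phi_M}\to J^{0}_{\Phi_M}$ at fixed $M$. Your justification --- fix a near-optimal control and note that the trajectories of \eqref{Eq35} and the running cost converge to their noiseless counterparts, the $\sqrt{\epsilon}$-fluctuations contributing $o(1)$ --- proves only $\limsup_{\epsilon\to 0}J^{\epsilon}_{\Phi_M}\le J^{0}_{\Phi_M}$, by plugging a deterministic near-minimizer of the limit problem into \eqref{Eq34}. But your chain $\liminf_{\epsilon} I^{\epsilon}\ge \liminf_{\epsilon} J^{\epsilon}_{\Phi_M}\ge J^{0}_{\Phi_M}$ needs the opposite inequality $\liminf_{\epsilon\to 0}J^{\epsilon}_{\Phi_M}\ge J^{0}_{\Phi_M}$, and for that direction the near-optimal controls are adapted random processes whose trajectories do not concentrate around any single deterministic path; closing it requires either a compactness and lower-semicontinuity argument for the controlled processes (weak-convergence methods in the spirit of \cite{BudD19}), or PDE estimates on the dynamic programming equation \eqref{Eq31} as in Fleming \cite{Fle78} --- or an appeal to the Ventcel--Freidlin upper estimate itself, which would make your penalization route circular. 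This is the technical heart of the proposition, and it is not free in this setting: since the second-order term in \eqref{Eq31} acts only on the $x^{1}$-block, the gradient bounds and equicontinuity needed both for this limit passage and for your Arzel\`a--Ascoli uniformity argument must be extracted from the hypoellipticity in Assumption~\ref{AS1}(b) rather than from uniform ellipticity; you flag this difficulty honestly, but flagging it is not the same as resolving it. (In fairness, the paper's own proof is equally a sketch at exactly these points, delegating \eqref{Eq41} to citations and simply asserting uniformity on compacts.)
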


\begin{proof}
It is suffices to show the following conditions
\begin{eqnarray}
  \limsup_{\epsilon \rightarrow 0} \epsilon\,\log \mathbb{P}_{s, \mathbf{x}_s}^{\epsilon} \Bigl \{ \mathbf{x}_\theta^{\epsilon} \in \partial \Omega \Bigl\} \le -I_{\Omega}^{\epsilon} \bigl(\bigl(s, \mathbf{x}^{\epsilon} \bigr); \, \partial \Omega \bigr) \label{Eq41}
\end{eqnarray}
and
\begin{eqnarray}
 \liminf_{\epsilon \rightarrow 0} \epsilon\,\log \mathbb{P}_{s, x_{s}}^{\epsilon} \Bigl \{ \mathbf{x}_{\theta}^{\epsilon} \in \partial \Omega \Bigl\} \ge -I_{\Omega}^{\epsilon} \bigl(\bigl(s, \mathbf{x}^{\epsilon} \bigr); \, \partial \Omega \bigr), \label{Eq42}
\end{eqnarray}
uniformly for all $\bigl(s, \mathbf{x}_s^{\epsilon} \bigr)$ in any compact subset $\bar{\Omega}^T$.

Note that $I_{\Omega}^{\epsilon} \bigl(\bigl(s, \mathbf{x}^{\epsilon} \bigr); \, \partial \Omega \bigr) = I^{\epsilon} \bigl(s, \mathbf{x}^{\epsilon} \bigr)$ (cf. Equation~\eqref{Eq38}), then the upper bound in \eqref{Eq41} can be verified using the Ventcel-Freidlin asymptotic estimates (see \cite[pp.\,332--334]{Frie06}, \cite{VenFre70} or \cite{Ven73}). 

On the other hand, to prove the lower bound in \eqref{Eq42}, we introduce a penalty function $\Phi_M^{\epsilon}$ (with $\Phi_M^{\epsilon}\bigl(t, \mathbf{y}\bigr)=0$ for $\bigl(t, \mathbf{y}\bigr) \in \Gamma_{T}^{+}$); and write $g^{\epsilon}=g_{M}^{\epsilon}$ ($\equiv \mathbb{E}_{s, \mathbf{s}}^{\epsilon} \bigl \{\exp\bigl(-\frac{1} {\epsilon} \Phi_{M}^{\epsilon} \bigr) \bigr\}$) and $J^{\epsilon}=J_{\Phi_M}^{\epsilon}$, with $\Phi^{\epsilon} =\Phi_M^{\epsilon}$. From the boundary condition in \eqref{Eq28}, then, for each $M$, we have
\begin{eqnarray}
 g^{\epsilon}\bigl(s, \mathbf{x}^{\epsilon} \bigr) \le g_{M}^{\epsilon}\bigl(s, \mathbf{x}^{\epsilon} \bigr). \label{Eq43}
\end{eqnarray}
Using Lemma~\ref{L3} and noting further the following 
\begin{eqnarray}
J_{\Phi_M}^{\epsilon} \bigl(s, \mathbf{x}^{\epsilon} \bigr) \ge I_{\Omega}^{\epsilon} \bigl(\bigl(s, \mathbf{x}^{\epsilon} \bigr); \, \partial \Omega \bigr).  \label{Eq44}
\end{eqnarray}
Then, the lower bound in \eqref{Eq42} holds uniformly for all $\bigl(s, \mathbf{x}_s^{\epsilon}\bigr)$ in any compact subset $\bar{\Omega}^T$. This completes the proof of Proposition~\ref{P2}. 
\end{proof}

\section{Main results} \label{S3}
Let $\hat{\mathbf{x}}_t^{\epsilon}$ be the solution to the following SDE
\begin{align}
d \hat{\mathbf{x}}_t^{\epsilon} &= \mathbf{f} (t, \hat{\mathbf{x}}_t^{\epsilon} ) + \mathbf{b} \, \sigma(t, \hat{\mathbf{x}}_t^{\epsilon} ) \,v^{\epsilon}(t, \hat{\mathbf{x}}_t^{\epsilon}) + \sqrt{\epsilon}\,\mathbf{b} \, \sigma(t, \hat{\mathbf{x}}_t^{\epsilon} ) \, dW_t, \notag\\
&\quad \quad \text{with an initial condition} \quad \hat{\mathbf{x}}_s^{\epsilon} = \mathbf{x}_s^{\epsilon},  \label{Eq45}
\end{align}
where $v^{\epsilon}$ is an appropriate control function (which also depends on $\epsilon$) to be chosen so as to reduce the variance of the importance sampling estimator.

Let
\begin{align}
 z^{\epsilon} = \exp\Bigl( -\frac{1} {\sqrt{\epsilon}} \int_s^T  \bigl\langle v^{\epsilon}(t, \hat{\mathbf{x}}_t^{\epsilon}) ,\, dW_t \bigr\rangle  -  \frac{1} {2\epsilon} \int_s^T  \bigl \vert v^{\epsilon}(t, \hat{\mathbf{x}}_t^{\epsilon}) \bigr \vert^2 dt \Bigr).  \label{Eq46}
 \end{align}
The corresponding importance sampling estimator is then given by
\begin{eqnarray}
 \hat{\rho}(\epsilon) = \frac{1}{N} \sum\nolimits_{j=1}^N \exp\Bigl( -\frac{1} {\epsilon} \Phi^{\epsilon}(\hat{\mathbf{x}}^{\epsilon^{(j)}}) \Bigr) {z^{\epsilon}}^{(j)},  \label{Eq47}
\end{eqnarray}
where $\left\{\bigl(\hat{\mathbf{x}}^{\epsilon^{(j)}}, {z^{\epsilon}}^{(j)}\bigr)\right\}_{j=1}^N$ are $N$-copies of independent samples of $\bigl(\hat{\mathbf{x}}^{\epsilon}, z^{\epsilon}\bigr)$. Note that, for an appropriately chosen control function $v^{\epsilon}$, the above importance sampling estimator in \eqref{Eq47} is an unbiased estimator for \eqref{Eqx.2}, i.e.,
\begin{align}
\mathbb{E}_{s, \mathbf{x}_s^{\epsilon}}^{\epsilon}\bigl[\hat{\rho}(\epsilon)\bigr] &= \mathbb{E}_{s, \mathbf{x}_s^{\epsilon}}^{\epsilon} \Bigl[ \exp\Bigl( -\frac{1} {\epsilon} \Phi^{\epsilon}(\mathbf{x}^{\epsilon}) \Bigr) \Bigr] \notag\\
& \equiv \mathbb{E}_{s, \mathbf{x}_s^{\epsilon}}^{\epsilon}\bigl[\rho(\epsilon)\bigr].  \label{Eq48}
\end{align}
Moreover, the relative estimation error is given by
\begin{align}
\Rerr\left(\hat{\rho}(\epsilon)\right) &= \frac{\sqrt {\Var\left(\hat{\rho}(\epsilon)\right)}}{\mathbb{E}_{s, \mathbf{x}_s^{\epsilon}}^{\epsilon}\bigl[\hat{\rho}(\epsilon)\bigr]}  \label{Eq49}
\end{align}
which can be rewritten as follows 
\begin{align}
\Rerr\left(\hat{\rho}(\epsilon)\right) &= \bigl(1/ \sqrt{N}\bigr) \sqrt{\Delta\left(\hat{\rho}(\epsilon)\right) - 1},  \label{Eq50}
\end{align}
where 
\begin{align}
\Delta\left(\hat{\rho}(\epsilon)\right) = \frac{ \mathbb{E}_{s, \mathbf{x}_s^{\epsilon}}^{\epsilon}\Bigl[ \exp\Bigl( -\frac{2} {\epsilon} \Phi^{\epsilon}( \hat{\mathbf{x}}^{\epsilon}) \Bigr) \Bigr] \bigl(z^{\epsilon}\bigr)^2 } { \mathbb{E}_{s, \mathbf{x}_s^{\epsilon}}^{\epsilon}\Bigl[ \exp\Bigl( -\frac{1} {\epsilon} \Phi^{\epsilon}(\mathbf{x}^{\epsilon}) \Bigr) \Bigr]^2 }.  \label{Eq51}
\end{align}
Hence, in order to reduce the relative estimation error $\Rerr\left(\hat{\rho}(\epsilon)\right)$, we need to control the term $\Delta\left(\hat{\rho}(\epsilon)\right)$ in \eqref{Eq50}. Note that, from Jensen's inequality, we have the following condition
\begin{align}
  \limsup_{\epsilon \rightarrow 0} - \epsilon\,\log  \mathbb{E}_{s, \mathbf{x}_s^{\epsilon}}^{\epsilon}\Bigl[ \exp\Bigl( -\frac{2} {\epsilon} \Phi^{\epsilon}(\hat{\mathbf{x}}^{\epsilon}) \Bigr) \Bigr] \le  2 \lim_{\epsilon \rightarrow 0} - \epsilon\,\log  \mathbb{E}_{s, \mathbf{x}_s^{\epsilon}}^{\epsilon}\Bigl[ \exp\Bigl( -\frac{1} {\epsilon} \Phi^{\epsilon}(\hat{\mathbf{x}}^{\epsilon}) \Bigr) \Bigr]   \label{Eq52}
\end{align}
which also implies $\Delta\left(\hat{\rho}(\epsilon)\right) \ge 1$ with $\lim_{\epsilon \rightarrow 0} \Delta\left(\hat{\rho}(\epsilon)\right) = 1$. Moreover, the statement in \eqref{Eq49} further implies the following
\begin{align*}
\Rerr\left(\hat{\rho}(\epsilon)\right) = \frac{1}{\sqrt{N}} \exp\bigl(o(1)/\epsilon\bigr) \quad \text{as} \quad \epsilon \rightarrow 0,
\end{align*}
which is generally referred as asymptotic efficiency or optimality. In this paper, our main objective is to choose appropriately the control function $v^{\epsilon}$ in \eqref{Eq45}, so that the resulting importance sampling estimator achieves a minimum rate of error growth. For this reason, we introduce the following standard definition from simulation theory (e.g., see \cite{Buc04} or \cite{BudD19}) which is useful for interpreting our result.
\begin{definition}
An importance sampling estimator of the form \eqref{Eq47} is log-efficient (i.e., asymptotic efficiency or optimal) if
\begin{align}
 \lim_{\epsilon \rightarrow 0} - \epsilon \log \Delta\left(\hat{\rho}(\epsilon)\right) = 0.  \label{Eq53}
\end{align}
\end{definition}

Then, we state our main result as follows.
\begin{proposition} \label{P3}
Suppose that the importance sampling estimator $\hat{\rho}(\epsilon)$ in \eqref{Eq47}, with $v^{\epsilon} (t, \mathbf{x}) = -\sigma^T(t, \mathbf{x}) J_{x^{\epsilon,1}}^{\epsilon}(t, \mathbf{x})$, is uniformly log-efficient (i.e., asymptotic efficient), where $J^{\epsilon}(t, \mathbf{x})$ satisfies the corresponding dynamic programming equation in $\Omega^T$, w.r.t. the system in \eqref{Eq45}, with $J^{\epsilon} = \Phi^{\epsilon}$ on $\partial^{\ast} \Omega^T$. Then, there exits a set $\mathbb{A} \subset \mathbb{R}^{nd}$ such that the Hausedorf dimension of $\mathbb{A}^c$ is zero and 
\begin{align}
 \lim_{\epsilon \rightarrow 0} \Rerr\left(\hat{\rho}(\epsilon)\right) = 0, \label{Eq54}
\end{align}
for all $x \in \mathbb{A}$.
 \end{proposition}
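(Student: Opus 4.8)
The plan is to exploit the exact change of measure induced by the feedback control $v^{\epsilon}$ and to show that, with the verification-optimal choice $v^{\epsilon}(t, \mathbf{x}) = -\sigma^T(t, \mathbf{x}) J_{x^{\epsilon,1}}^{\epsilon}(t, \mathbf{x})$, the random integrand in the estimator \eqref{Eq47} collapses to a deterministic constant. Unbiasedness is already recorded in \eqref{Eq48}, so by \eqref{Eq50}--\eqref{Eq51} it suffices to control the ratio $\Delta(\hat{\rho}(\epsilon))$; I would in fact prove the stronger statement that $\Delta(\hat{\rho}(\epsilon)) = 1$ wherever the control is well defined, which delivers the log-efficiency \eqref{Eq53} and $\Rerr(\hat{\rho}(\epsilon)) = 0$ simultaneously.

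First I would apply It\^{o}'s formula to $J^{\epsilon}(t, \hat{\mathbf{x}}_t^{\epsilon})$ along the tilted dynamics \eqref{Eq45}, in which only the first block carries the extra drift $\sigma v^{\epsilon}$ and the diffusion $\sqrt{\epsilon}\,\sigma\,dW_t$. Substituting the dynamic programming equation \eqref{Eq31}--\eqref{Eq32} to replace the backward operator acting on $J^{\epsilon}$, and using $\sigma v^{\epsilon} = -a\,J_{x^{\epsilon,1}}^{\epsilon}$ together with $|v^{\epsilon}|^2 = (J_{x^{\epsilon,1}}^{\epsilon})^T a\, J_{x^{\epsilon,1}}^{\epsilon}$, the first-order and Hessian contributions cancel and a direct computation reduces the drift to $-\tfrac{1}{2}|v^{\epsilon}|^2$, giving
\begin{align*}
d J^{\epsilon}(t, \hat{\mathbf{x}}_t^{\epsilon}) = -\tfrac{1}{2}\bigl|v^{\epsilon}(t,\hat{\mathbf{x}}_t^{\epsilon})\bigr|^2\,dt - \sqrt{\epsilon}\,\bigl\langle v^{\epsilon}(t,\hat{\mathbf{x}}_t^{\epsilon}),\, dW_t\bigr\rangle.
\end{align*}
Integrating from $s$ to the exit time $\theta$ and invoking the boundary condition $J^{\epsilon} = \Phi^{\epsilon}$ on $\partial^{\ast}\Omega^T$ (so that $J^{\epsilon}(\theta, \hat{\mathbf{x}}_{\theta}^{\epsilon}) = \Phi^{\epsilon}(\theta, \hat{\mathbf{x}}_{\theta}^{\epsilon})$), I would solve for the stochastic integral and substitute into the definition \eqref{Eq46} of $z^{\epsilon}$. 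The two occurrences of $\tfrac{1}{2\epsilon}\int_s^{\theta}|v^{\epsilon}|^2\,dt$ cancel, leaving the pathwise identity $z^{\epsilon} = \exp\bigl(\tfrac{1}{\epsilon}[\Phi^{\epsilon}(\theta, \hat{\mathbf{x}}_{\theta}^{\epsilon}) - J^{\epsilon}(s, \mathbf{x}_s^{\epsilon})]\bigr)$.

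Consequently the integrand of \eqref{Eq47} satisfies $\exp(-\tfrac{1}{\epsilon}\Phi^{\epsilon}(\hat{\mathbf{x}}^{\epsilon}))\,z^{\epsilon} = \exp(-\tfrac{1}{\epsilon} J^{\epsilon}(s, \mathbf{x}_s^{\epsilon}))$ almost surely, i.e. it is deterministic given the starting point. Hence the second moment in the numerator of \eqref{Eq51} equals the square of the first moment in its denominator, so $\Delta(\hat{\rho}(\epsilon)) = 1$; by \eqref{Eq50} this forces $\Rerr(\hat{\rho}(\epsilon)) = 0$ (a genuine zero-variance estimator), which gives \eqref{Eq54} a fortiori, while \eqref{Eq53} holds trivially. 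This is the easy half: the zero-variance identity is essentially algebraic once the It\^{o}/HJB cancellation is in hand.

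The remaining and genuinely delicate point is to identify the set $\mathbb{A}$ on which this argument is licit, since it presupposes that $J_{x^{\epsilon,1}}^{\epsilon}$ exists and that $v^{\epsilon}$ is admissible. By the hypoellipticity in Assumption~\ref{AS1}(b) and Proposition~\ref{P1}, the linear solution $g^{\epsilon} = \exp(-J^{\epsilon}/\epsilon)$ is $C^{\infty}$ and, by the strong Feller property together with the strong maximum principle, strictly positive in the open cylinder $\Omega^T$; thus $J^{\epsilon}$ is smooth and the feedback control is well defined in the interior. The obstruction lives on the characteristic part of the boundary: the set $\Gamma^0$ of Assumption~\ref{AS1}(c) where $\langle \mathbf{f}, n\rangle = 0$, together with the singular (shock) set of the limiting first-order problem solved by $I^0$, toward which $J^{\epsilon}$ converges as in Proposition~\ref{P2}. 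I expect the \emph{main obstacle} to be showing that this exceptional set $\mathbb{A}^c$ has Hausdorff dimension zero. The plan for that step is to use the degenerate chain structure: since the single $d$-dimensional Brownian component is transmitted through all $n$ blocks under the H\"{o}rmander condition, the admissible directions at a characteristic point are strongly constrained, and combining the smoothness of $g^{\epsilon}$ off its characteristic set with the almost-sure exit through $\Gamma^{+}$ (the footnote to Assumption~\ref{AS1}(c), via \cite{StrVa72}) should pin the dimension of $\Gamma^0$ and of the shock set of $I^0$ down to zero. This dimensional estimate, rather than the variance identity, is where the real work lies.
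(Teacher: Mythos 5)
The paper itself states Proposition~\ref{P3} and then moves directly to the concluding remarks: no proof is given (unlike Proposition~\ref{P1}, which is delegated to \cite{BefA15}), so there is no in-paper argument to compare you against and your proposal must stand on its own. Its first half does stand: applying It\^{o}'s formula to $J^{\epsilon}(t,\hat{\mathbf{x}}_t^{\epsilon})$ along \eqref{Eq45}, using $\sigma v^{\epsilon}=-a\,J^{\epsilon}_{x^{\epsilon,1}}$ and the HJB equation, and substituting into \eqref{Eq46} to get the pathwise identity $\exp\bigl(-\tfrac{1}{\epsilon}\Phi^{\epsilon}\bigr)z^{\epsilon}=\exp\bigl(-\tfrac{1}{\epsilon}J^{\epsilon}(s,\mathbf{x}_s^{\epsilon})\bigr)$ is the classical zero-variance $h$-transform computation, and it correctly yields $\Delta\left(\hat{\rho}(\epsilon)\right)=1$, hence both \eqref{Eq53} and $\Rerr\left(\hat{\rho}(\epsilon)\right)=0$ wherever it is licit. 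Two small repairs you silently made should be flagged: \eqref{Eq31} as printed double-counts $f_1$ (the sum there must run over $j\ge 2$ to be consistent with \eqref{Eq32}--\eqref{Eq35}; your cancellation uses the corrected equation), and \eqref{Eq46} integrates to $T$ while your identity stops at the exit time $\theta$, so the control must be stopped at $\theta$ and the Girsanov step needs $z^{\epsilon}$ to be a true martingale, which requires a localization argument since $v^{\epsilon}$ is only known to be smooth in the interior.

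The genuine gap is exactly the part you defer: the proposition's actual conclusion is the existence of $\mathbb{A}$ with $\operatorname{dim}_H\mathbb{A}^c=0$, and your proposal offers only a plan for this, a plan whose central assertion is false as stated. The set $\Gamma^{0}=\{\langle\mathbf{f},n\rangle=0\}$ is generically a codimension-one subset of the $(nd-1)$-dimensional manifold $\partial\Omega$, hence of Hausdorff dimension $nd-2>0$ for $nd\ge 3$; no appeal to the H\"{o}rmander chain structure changes the dimension of a set defined by one scalar equation on $\partial\Omega$. Likewise, the non-differentiability (shock) set of the limiting first-order solution $I^{0}$ from Proposition~\ref{P2} is, for semiconcave value functions, in general only countably $(nd-1)$-rectifiable, again far from dimension zero. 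So neither mechanism you invoke can deliver $\operatorname{dim}_H\mathbb{A}^c=0$; a correct argument would have to identify a genuinely smaller exceptional set (for instance, initial points from which the deterministic minimization \eqref{Eq37} has non-unique minimizers, together with a problem-specific dimensional estimate exploiting the chain structure) and prove the estimate, none of which is attempted. In summary: you have a complete and correct proof of a strong variance identity on the regular set, but the dimension-zero characterization of $\mathbb{A}^c$ --- the only nontrivial claim in \eqref{Eq54} as stated --- remains unproven, and the sketched route to it would fail.
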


\section{Concluding remarks} \label{S4}
In this paper, we considered an importance sampling problem for a certain rare-event simulations involving the behavior of a diffusion process pertaining to a chain of distributed systems with random perturbations. In particular, we have provided an efficient importance sampling estimator, with an exponential variance decay rate, for the asymptotics of the probabilities of the rare-events involving the behavior of such a diffusion process on finite time intervals which also ensures a minimum relative estimation error in the small noise limit. The framework for such an analysis basically relies on the connection between the probability theory of large deviations and the values functions for a family of stochastic control problems associated with the underlying distributed system, where such a connection provides a computational paradigm -- based on an exponentially-tilted biasing distribution -- for constructing efficient importance sampling estimators for the rare-event simulation. Moreover, as a by-product, the framework also allows us to derive a family of Hamilton-Jacobi-Bellman for which we also provide a solvability condition for the corresponding optimal control problem.

\end{document}